\newcommand\p{\circle*{0.2}}
\newtheorem{theorem}{Theorem}
\newtheorem{proposition}{Proposition}
\newtheorem{conjecture}{Conjecture}
\title{S-crucial and bicrucial permutations with respect to squares}
\author{Ian Gent\thanks{School of Computer Science, University of St Andrews, St Andrews, Fife KY16 9SX, UK. \newline
Emails: \{ian.gent, alexander.konovalov, sl4, pwn1\}@st-andrews.ac.uk
}, Sergey Kitaev\thanks{School of Computer and Information Sciences, University of Strathclyde, Glasgow, G1 1HX, UK. \newline Email: sergey.kitaev@cis.strath.ac.uk}, Alexander Konovalov\footnotemark[1], \\ Steve Linton\footnotemark[1], and Peter Nightingale\footnotemark[1]}
\begin{document}

\maketitle
\thispagestyle{empty}

\begin{abstract}  A permutation is square-free if it does not contain two consecutive factors of length two or more that are order-isomorphic. A permutation is bicrucial with respect to squares if it is square-free but any extension of it to the right or to the left by any element gives a permutation that is not square-free. 

Bicrucial permutations with respect to squares were studied by Avgustinovich et al. \cite{AKPV}, who proved that there exist bicrucial permutations of lengths $8k+1, 8k+5, 8k+7$ for $k\ge 1$. It was left as open questions whether bicrucial permutations of even length, or  such permutations of length $8k+3$ exist. In this paper, we provide an encoding of orderings which allows us, using the constraint solver Minion, to show that bicrucial permutations of even length exist, and the smallest such permutations are of length 32. To show that 32 is the minimum length in question, we establish a result on left-crucial (that is, not extendable to the left) square-free permutations which begin with three elements in monotone order. Also, we show that bicrucial permutations of length $8k+3$ exist for $k=2,3$ and they do not exist for $k=1$. 

Further, we generalise the notions of right-crucial, left-crucial, and bicrucial permutations studied in the literature in various contexts, by introducing the notion of $P$-crucial permutations that can be extended to the notion of $P$-crucial words. In S-crucial permutations, a particular case of $P$-crucial permutations, we deal with permutations that avoid prohibitions, but whose extensions in any position contain a prohibition. We show that S-crucial permutations exist with respect to squares, and minimal such permutations are of length 17.

Finally, using our software, we generate much of relevant data showing, for example, that there are 162,190,472 bicrucial square-free permutations of length 19. \\

\noindent
{\bf MSC classification:} 05A05, 68R15\end{abstract}

\section{Introduction}\label{intro}

A {\em factor} of a word is a number of consecutive letters in the word. A word $w$ {\em avoids} a word $u$ if $w$ does not contain $u$ as a factor.  Let $S$ be a set of prohibited factors, that is, factors to be avoided. A word $w$ over $A$ is {\em right-crucial} (resp., {\em left-crucial}) with respect to $S$ if it avoids the prohibitions, but adjoining a new letter from $A$ to the right (resp., left) of $w$ gives a word that does not avoid the prohibitions. Clearly, studying right-crucial words can be turned to studying left-crucial words, and vice versa (through reversing all words in question). A word is {\em bicrucial} if it is both right- and left-crucial. 

We say that a word $w$ contains a {\em $k$-th
power}, if $w$ contains a factor $XX\cdots X$ with $k$ non-empty
words $X$. The case $k=2$ corresponds to {\em squares} in words, and their study was initiated by Alex Thue in \cite{thue} in 1906. A word $w$ contains an {\em abelian $k$-th power},
if $w$ contains a factor $X_1X_2\cdots X_k$ where $X_i$ is a
permutation of $X_1$ for $2\leq i\leq k$. Paul Erd\H{o}s \cite{pE61some} introduced the notion of abelian squares (the case of $k=2$) in 1961. Right-crucial and bicrucial permutations with respect to squares, abelian squares, and, more generally, $k$-th powers and abelian $k$-th powers were studied in \cite{AGHK,EK,B, C, GHK, K}. These studies belong to the area of combinatorics on words. 

Avgustinovich et al. \cite{AKPV} extended the notion of squares, as well as the notion of (right-,bi)crucial words, from words to permutations by merging the respective notions in combinatorics on words and the theory of permutation patterns (see \cite{Kit} for a comprehensive introduction to the theory); precise definitions will be given below. More studies in this direction were conducted in \cite{AKV}. 

It was shown in  \cite{AKPV} that right-crucial permutations exist of any length larger than $6$, while existence of bicrucial permutations was only shown for lengths $8k+1, 8k+5, 8k+7$ for $k\ge 1$ (the shortest such permutation is of length $9$). The question on whether there exists bicrucial permutations of even lengths and of length $8k+3$ for some $k\geq 1$ was open for about three years. In this paper, we will show that bicrucial permutations of even length exist (the smallest such permutation is of length 32), and that bicrucial permutations of length $8k+3$ exist for $k=2,3$ and they do not exist for $k=1$. Our main tool in obtaining the results is an {\em encoding of orderings} (see Subsection \ref{encoding-orderings}) and the constraint solver {\em Minion}.

Note that what we call ``right-crucial permutations'' are known as ``crucial permutations'' in the literature. In this paper we generalise the notion of  right-crucial and bicrucial permutations to {\em $P$-crucial permutations} with respect to a given set of prohibitions, where $P$ is a possibly infinite set of non-negative integers. A particular example of $P$-crucial permutations is {\em S-crucial} (standing for {\em Super-crucial}) {\em permutations} which avoid prohibitions but extending them in {\em any} position leads to a prohibition. This notion has an obvious counter-part in the case of words. We will show that the minimum S-crucial permutation with respect to squares is of length 15 (see Section~\ref{P-crucial-S-crucial}).

The paper is organised as follows. In Section~\ref{sec2}  we discuss all objects of interest stating known, and our results on them. In Section \ref{generating-perms} we discuss software used to obtain most of our results. Finally, in Section~\ref{open}  we discuss some of directions for further research.  

In this paper, we need the following notions. The {\em reverse} of a permutation $\pi=\pi_1\pi_2\cdots \pi_n$ is the permutation $r(\pi)=\pi_n\pi_{n-1}\cdots\pi_1$, while the {\em complement} of $\pi$ is the permutation $c(\pi)=(n+1-\pi_1)(n+1-\pi_2)\cdots (n+1-\pi_n)$. For example, if $\pi=2134$ then $r(\pi)=4312$ and $c(\pi)=3421$. When we say ``{\em up to symmetry}'' in counting problems here we mean that if a permutation $\pi$ has been already counted, then the permutation $c(\pi)$ is not considered and (for properties where $\pi$ having the property guarantees that $r(\pi)$ does), $r(\pi)$ and $c(r(\pi))=r(c(\pi))$ are not to be considered.  

\section{Objects of interest and our results}\label{sec2}

In what follows, for a permutation $\pi=\pi_1\pi_2\cdots\pi_n$ in question, we will write $x\ldots y=s\ldots t$ to indicate that the pattern formed by the elements of $\pi$ in positions $x,x+1,\ldots,y$  is equal to that formed by the elements of $\pi$ in position $s,s+1,\ldots,t$. In $x\pi'$, an extension of $\pi$ to the left (defined properly below), we assume $x$ to be in position 0, while in $\pi'y$, an extension of $\pi$ to the right (again, defined properly below), we assume $y$ to be in position $n+1$. For example, in the permutation 492517638 we have 12=34 (because 49 and 25 form the same pattern 12) and 456=789, which can be written $4\ldots 6=7\ldots 9$ (because 517 and 638 form the same pattern 213). 

We will be referring to tables showing the number of permutations satisfying various properties. 
These were calculated using constraint programming techniques and a constraint solver called Minion \cite{minion}.
We give more detail about our approach below, in Section \ref{sec:cp}. 
For now we just mention that
we report a metric of search cost called  nodes. 
Minion (along with many other constraint solvers) performs a depth-first exploration of a rooted binary tree. 
The root represents the initial state and left branches represent an (exploratory) assignment of a variable. Solutions are found at leaf nodes. Minion reports the number of left branches as its node count.

\subsection{Square-free permutations}\label{square-free-perms} 

We use the one-line notation to represent permutations. A {\em factor} of a permutation $\pi$ is one or more consecutive elements in $\pi$. For example, 4273, 36 and 1 are factors in the permutation 5142736.

A permutation is {\em square-free} if it does not contain two consecutive factors of length two or more that are in the same relative order, that is, that are order-isomorphic. For example, the permutation 243156 is square-free, while the permutation 631425 contains the square 3142 (indeed, 31 is order isomorphic to 42). Another example of a permutation that does not avoid squares is 742563891 as it contains the squares 425638, 5638 and 563891. The number of square-free permutations of length $n$, for $n=1,2,\ldots, 18$, is 
$$\begin{array}{c}1, 2, 6, 12, 34, 104, 406, 1112, 3980, 15216, 68034, 312048, 1625968, 8771376, \\
53270068, 319218912, 2135312542, 14420106264\end{array}$$
and this is the sequence  A221989 in the {\em On-Line Encyclopedia of Integer Sequences}, {\em OEIS} \cite{OEIS}. See also Table~\ref{tab:squarefree}. It is known \cite{AKPV} that  the number of square-free permutations
of length $n$ is $n^{n(1-\varepsilon_n)}$ where
$\varepsilon_n\rightarrow
0$ when $n\rightarrow\infty$. 

\def\tabheader{
\begin{table}
\begin{center}
\begin{tabular}{r|r|r|r|r}
$n$ & Num Solutions & Nodes & Search time (s) & Total time (s) \\ \hline
}
\def\tabheader{
\begin{table}
\begin{center}
\begin{tabular}{r|rr|rr|rr}
$n$ & Solutions & Nodes & Solutions  & Nodes & Solutions & Nodes \\ 
 &  & Searched &up to Symmetry & Searched & $\pi=r(c(\pi))$& Searched \\

 &  & & & & up to Symmetry& \\ \hline
 }
\def\tabheadershort{
\begin{table}
\begin{center}
\begin{tabular}{r|r|r|r}
$n$ & Num Solutions & Nodes & Total time (s) \\ \hline
}
\def\tabheadershort{
\begin{table}
\begin{center}
\begin{tabular}{r|rr|r|rr}
$n$ & Solutions & Nodes & Solutions  & Solutions & Nodes \\ 
  &  & Searched &up to Symmetry & $\pi=r(c(\pi))$& Searched \\
 &  & & & up to Symmetry& \\ \hline
}
\newcommand\tabfooter[2]{
\end{tabular}
\end{center}
\caption{#1}
\label{#2}
\end{table}
}
\newcommand\Comment[1]{}

\tabheader
 3 &            6 &           11 &            2 &            4 &            1 &            2 \\
 4 &           12 &           29 &            3 &           11 &            0 &            3 \\
 5 &           34 &           90 &           10 &           33 &            3 &            9 \\
 6 &          104 &          296 &           26 &           93 &            0 &           17 \\
 7 &          406 &         1147 &          105 &          276 &            7 &           23 \\
 8 &         1112 &         3845 &          278 &          932 &            0 &           10 \\
 9 &         3980 &        14505 &         1011 &         3936 &           32 &          117 \\
10 &        15216 &        57680 &         3804 &        14534 &            0 &          130 \\
11 &        68034 &       256501 &        17065 &        60237 &          113 &          402 \\
12 &       312048 &      1209914 &        78012 &       313175 &            0 &           21 \\
13 &      1625968 &      6244642 &       406795 &      1764062 &          606 &         2913 \\
14 &      8771376 &     34262121 &      2192844 &      8714313 &            0 &          434 \\
15 &     53270068 &    204080489 &     13318687 &     47248481 &         2340 &        11593 \\
16 &    319218912 &   1260657446 &     79804728 &    318506973 &            0 &           36 \\
17 &  {2135312542}& - &    533838106 &   2336505587 &        19941 &       107779 \\
\tabfooter{Enumerating square-free permutations of length $n$ using the constraint programming approach described in
Section~\ref{sec:cp}.   Every number is by  direct computer enumeration except for $n=17$ solutions, which is calculated as 4 times the number of solutions up to symmetry minus twice the number of reverse-complemented solutions.    The first column of solutions is A221989 in \cite{OEIS}.}{tab:squarefree}

It is easy to see \cite{AKPV}  that for a permutation $\pi=\pi_1\pi_2\cdots$ to avoid squares of length 2, there must exist $i\in \{0,1,2,3\}$ so that for
every non-negative integer $t$, the inequalities
$$\pi_{i+4t}< \pi_{i+4t\pm 1} \mbox{ and } \pi_{i+4t+2}> \pi_{i+4t\pm 3} \eqno(1)$$
hold.
Schematically, the four possible kinds of square-free permutations (according to the choice of $i$) are as follows:

\begin{center}
\begin{picture}(24,3)
\put(2,0){\put(-2,1){$i=1$}\put(0,0){\p} \put(1,1){\p} \put(2,2){\p}
\put(3,1){\p}\put(4,0){\p}\put(5,1){\p}\put(6,2){\p}\put(7,1){\p}\put(8,0){\p}\put(8.5,1){...}
\path(0,0)(1,1)(2,2)(3,1)(4,0)(5,1)(6,2)(7,1)(8,0)

\put(11,1){$i=0$}\put(14,1){\p} \put(15,2){\p}
\put(16,1){\p}\put(17,0){\p}\put(18,1){\p}\put(19,2){\p}\put(20,1){\p}\put(21,0){\p}\put(22,1){\p}\put(22.5,1){...}
\path(14,1)(15,2)(16,1)(17,0)(18,1)(19,2)(20,1)(21,0)(22,1)

}\end{picture}

\begin{picture}(24,3)
\put(2,0){\put(-2,0.5){$i=3$}\put(0,2){\p}
\put(1,1){\p}\put(2,0){\p}\put(3,1){\p}\put(4,2){\p}\put(5,1){\p}\put(6,0){\p}\put(7,1){\p}\put(8,2){\p}\put(8.5,1){...}
\path(0,2)(1,1)(2,0)(3,1)(4,2)(5,1)(6,0)(7,1)(8,2)

\put(11,0.5){$i=2$}\put(14,1){\p}\put(15,0){\p}\put(16,1){\p}\put(17,2){\p}\put(18,1){\p}\put(19,0){\p}\put(20,1){\p}\put(21,2){\p}
\put(22,1){\p}\put(22.5,1){...}
\path(14,1)(15,0)(16,1)(17,2)(18,1)(19,0)(20,1)(21,2)(22,1)

}\end{picture}
\end{center}

\noindent
where a dot represents an element in $\pi$ and the order of elements
represented by two non-consecutive dots is irrelevant, whereas each
pair of consecutive dots is comparable (a lower dot represents a
smaller element).

For better understanding of the properties of square-free permutations, we enumerated such permutations up to symmetry. These values are presented in Table~\ref{tab:squarefree}. These sequences were not known to the OEIS \cite{OEIS}, and to the best of our knowledge, these results are new. We note that solutions up to symmetry were used in Table~\ref{tab:squarefree} to calculate the total number of square-free permutations in the case of $n=17$. The inclusion-exclusion principle is used here: each symmetrically distinct $\pi$ gives four permutations through combinations of $c$ and $r$, unless $\pi = c(r(\pi))$ (equivalently, $\pi=r(c(\pi))$), in which case it gives two.   Also, note that the 0s in the next to the rightmost column in Table~\ref{tab:squarefree} follow from the zigzag structure of square-free permutations.

\subsection{Right- and left-crucial permutations with respect to squares}\label{sub-right-crucial-perm}

In this subsection we define the notion of {\em right-crucial permutations with respect to squares} which was previously known \cite{AKPV}  as {\em crucial permutations with respect to squares}.  

Let $\pi=\pi_1\pi_2\cdots\pi_n$ be a permutation of length $n$. An {\em extension} of $\pi$ by an element $x\in\{1,2,\ldots,n+1\}$ in position $i\in\{0,1,\ldots,n\}$ is the permutation $\pi'_1\pi'_2\cdots\pi'_{i}x\pi'_{i+1}\pi'_{i+2}\cdots\pi'_n$ of length $n+1$, where $\pi'_i=\pi_i$ if $\pi_i<x$ and $\pi'_i=\pi_i+1$ otherwise. In particular, the case of $i=0$ is called an extension of $\pi$ by $x$ to the left and the case $i=n$ is called an extension of $\pi$ by $x$ to the right. 

A permutation is right-crucial with respect to squares if it is square-free but any extension of it to the right by any element gives a permutation that is not square-free. We also say that such a permutation is {\em right-crucial square-free permutation}. For example, the permutation 2136547 is right-crucial with respect to squares. It is shown in \cite{AKPV} that right-crucial permutations with respect to squares exist of any length larger than 6. All the right-crucial
permutations with respect to squares of length 7 are listed below: 
$$\begin{array}{l}2136547,\ 2137546,\ 2146537,\ 2147536,\ 2156437,\ 2157436,\
2167435,\ \\
 3146527,\ 3147526,\ 3156427,\ 3157426,\ 3167425,\ 3246517,\
 3247516,\ \\
 3256417,\ 3257416,\ 3267415,\ 3421675,\ 3521674,\ 3621574,\
 3721564,\ \\
 4156327,\ 4157326,\ 4167325,\ 4256317,\ 4257316,\ 4267315,\
 4356217,\ \\
 4357216,\ 4367215,\ 4521673,\ 4531672,\ 4532671,\ 4621573,\
 4631572,\ \\
 4632571,\ 4721563,\ 4731562,\ 4732561,\ 5167324,\ 5267314,\
 5367214,\ \\
 5467213,\ 5621473,\ 5631472,\ 5632471,\ 5641372,\ 5642371,\
 5721463,\ \\
 5731462,\ 5732461,\ 5741362,\ 5742361,\ 6721453,\ 6731452,\
 6732451,\ \\
 6741352,\ 6742351,\ 6751342,\ 6752341.\end{array}$$

The number of right-crucial permutations of length $n$ with respect to squares, for $n=7,8,\ldots, 17$, is 
$$60, 140, 518, 1444, 8556, 31992, 220456, 984208, 7453080, 39692800, 289981136$$
and this is the sequence  A221990 in the OEIS \cite{OEIS}. See also Table \ref{tab:right-crucial}.

\tabheadershort
 3 &            0 &            0 &            0 &            0 &            0 \\
 4 &            0 &            0 &            0 &            0 &            0 \\
 5 &            0 &            0 &            0 &            0 &            0 \\
 6 &            0 &            0 &            0 &            0 &            0 \\
 7 &           60 &          262 &           30 &            0 &           16 \\
 8 &          140 &          658 &           70 &            0 &           25 \\
 9 &          518 &         2978 &          259 &            5 &           60 \\
10 &         1444 &         9135 &          722 &            0 &          232 \\
11 &         8556 &        44110 &         4278 &            0 &           46 \\
12 &        31992 &       157334 &        15996 &            0 &           61 \\
13 &       220456 &      1109525 &       110228 &          168 &         1841 \\
14 &       984208 &      5008522 &       492104 &            0 &          845 \\
15 &      7453080 &     46370720 &      3726540 &            0 &         4113 \\
16 &     39692800 &    251929277 &     19846400 &            0 &          113 \\
17 &    289981136 &   1939299692 &    144990568 &         3522 &        81951 \\
\tabfooter{Enumerating right-crucial permutations of length $n$ with respect to squares.  In this case, $\pi$ being a solution only guarantees that $c(\pi)$ is, but not $r(\pi)$.  The solutions up to symmetry is always exactly half the total number of solutions, and that number is used in the centre column without an additional search. }{tab:right-crucial}

The notion of a {\em left-crucial permutation with respect to squares} can be defined similarly. Namely, a permutation is left-crucial with respect to squares if it is square-free but any extension of it to the left by an element gives a permutation that is not square-free. Taking into account that the reverse of a permutation  preserves the property of avoiding squares, it is equivalent to study right-crucial and left-crucial permutations (in particular, the numbers of these permutations of length $n$ are the same for any $n$).

The following proposition follows directly from the zigzag structure of square-free permutations described in Subsection \ref{square-free-perms}.

\begin{proposition}\label{prohibited-squares} If $\pi$ is a left-crucial (resp., right-crucial) permutation with respect to squares, and $\sigma=x\pi'$ (resp.,  $\sigma=\pi'x$) is its extension to the left (resp., right), then a prohibited square in $\sigma$ is either of length $4$, or of length multiple of~$8$. \end{proposition}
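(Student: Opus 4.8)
The plan is to reduce to the right-crucial case and then read everything off the zigzag description (1) of square-free permutations. Since reversing a permutation preserves square-freeness and interchanges left- and right-extensions, it suffices to treat $\pi$ right-crucial with $\sigma=\pi'x$, where the new letter $x$ sits in position $n+1$. For a permutation $\tau=\tau_1\cdots\tau_m$ I write $s_j=+$ if $\tau_j<\tau_{j+1}$ and $s_j=-$ otherwise; by (1), square-freeness forces the zigzag recurrence $s_{j+2}=-s_j$. First I would observe that the prefix $\pi'_1\cdots\pi'_n$ is order-isomorphic to $\pi$, hence square-free, so any prohibited square of $\sigma$ must use $x$; being a factor that contains the last position, such a square is a suffix of $\sigma$ of some length $2k$, occupying positions $n-2k+2,\dots,n+1$, all of which lie in $\pi'$ except the last.

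Next I would pass to the sign string $s_{n-2k+2},\dots,s_n$ of the $2k$ entries of the square. All of these comparisons are internal to $\pi'$, and therefore obey $s_{j+2}=-s_j$, except the final one $s_n$ (comparing $\pi'_n$ with $x$), which need not. A necessary condition for the two halves of the square to be order-isomorphic is that their sign patterns agree, i.e.\ $s_{n-2k+1+i}=s_{n-k+1+i}$ for $i=1,\dots,k-1$; only the condition $i=k-1$ involves $s_n$. Since I only need to constrain the \emph{length}, this necessary condition on signs will be enough.

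The decisive split is on whether the extension continues the local zigzag, i.e.\ whether $s_n=-s_{n-2}$ or $s_n=s_{n-2}$ (a property of the extension, independent of $k$). If it breaks the zigzag, $s_n=s_{n-2}$, then the last four entries $\pi'_{n-2}\pi'_{n-1}\pi'_nx$ already form a square, so a prohibited square of length $4$ is present. If it continues the zigzag, then the entire sign string obeys $s_{j+2}=-s_j$, and I substitute this into the matching system above. For $k\equiv 2\pmod 4$ (necessarily $k\ge 6$ in this branch, as $k=2$ is exactly the broken-zigzag length-$4$ case) one gets $s=-s$ at once; for odd $k$, combining the relations coming from $i=1$ and $i=2$ forces a sign contradiction $s=-s$ as well. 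Both being impossible leaves only $k\equiv 0\pmod 4$, that is, a square whose length $2k$ is a multiple of~$8$.

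The main obstacle is the odd-length case. A first-half-only analysis does not suffice, because a suffix of length $2k$ with $k$ odd is perfectly consistent with the zigzag pattern of its first $2k-1$ positions: for instance $263154$ has its first five entries in zigzag form yet contains the length-$6$ square with order-isomorphic halves $263$ and $154$. Thus the contradiction genuinely requires pushing the extra comparison $s_n$ through the recurrence, which is precisely what the ``continues the zigzag'' branch supplies, while the complementary branch is exactly the one that manufactures the length-$4$ square. Care is therefore needed to isolate $s_n$ and to organise the argument around these two branches; once that is done, the length of the (shortest) prohibited square is pinned to $4$ or a multiple of $8$, proving the proposition.
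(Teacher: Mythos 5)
Your proof is correct, and it is precisely the argument the paper intends: the paper gives no proof beyond asserting that the proposition ``follows directly from the zigzag structure,'' and your sign-recurrence analysis---splitting on whether the new element breaks the zigzag (which manufactures a length-$4$ square) or continues it (in which case the matching relations force $k\equiv 0\pmod 4$, i.e.\ length a multiple of $8$)---is the full elaboration of exactly that assertion. Your closing remark also identifies the correct reading of the statement: it must be understood existentially (the extension contains \emph{some} prohibited square, e.g.\ the shortest one, of length $4$ or a multiple of $8$), since in the zigzag-breaking branch a length-$6$ square can genuinely coexist with the length-$4$ one; for instance, extending $2137546$ (from the paper's own list of right-crucial permutations of length $7$) on the right by the element $5$ gives $21386475$, which contains the length-$6$ square $386475$ alongside the length-$4$ square $6475$.
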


We prove the following proposition, whose analogue for right-crucial permutations is easy to obtain by applying the reverse operation to permutations that  turns, in particular, position 0 to position $n$.

\begin{theorem}\label{thm-16-24} Suppose $\pi$ is a left-crucial permutation with respect to squares. If extending $\pi$  to the left results in a square of length $16$ (that is, in $0\ldots 7=8\ldots15$) then there is no extension to the left of $\pi$ that results in a square of length $24$ (that is, in $0\ldots 11=12\ldots 23$).\end{theorem}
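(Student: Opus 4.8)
The plan is to convert each of the two hypothetical squares produced by a left-extension into an order-isomorphism between two factors of $\pi$ itself, and then to combine these two isomorphisms to exhibit an honest square inside $\pi$, contradicting its square-freeness. I would first record the reduction that justifies the parenthetical identifications in the statement. If $\sigma=x\pi'$ is any left-extension of $\pi$, inserting $x$ is an order-preserving relabelling of the entries in positions $1,\dots,n$, so the pattern of any factor of $\sigma$ lying entirely in positions $\ge 1$ coincides with the pattern of the corresponding factor of $\pi$. Hence any square of $\sigma$ must use position $0$, for otherwise it would already be a square of the square-free permutation $\pi$. This is exactly why a length-$16$ square in such an extension can only be $0\ldots 7=8\ldots 15$ and a length-$24$ square can only be $0\ldots 11=12\ldots 23$; Proposition~\ref{prohibited-squares} already limits the admissible lengths to $4$ or a multiple of $8$, and both $16$ and $24$ are of this form.

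Next I would strip the prepended letter from each half of a square. The length-$16$ square $0\ldots 7=8\ldots 15$ has halves $(x,\pi'_1,\dots,\pi'_7)$ and $(\pi'_8,\dots,\pi'_{15})$; deleting the first entry of each (those in positions $0$ and $8$) leaves two order-isomorphic factors which, read inside $\pi$, assert that $\pi_1\cdots\pi_7$ and $\pi_9\cdots\pi_{15}$ are order-isomorphic. The same manoeuvre applied to the length-$24$ square $0\ldots 11=12\ldots 23$ shows that $\pi_1\cdots\pi_{11}$ and $\pi_{13}\cdots\pi_{23}$ are order-isomorphic, and in particular, restricting to the first seven entries of each side, that $\pi_1\cdots\pi_7$ and $\pi_{13}\cdots\pi_{19}$ are order-isomorphic. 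Note that these two conclusions depend only on the square-freeness of $\pi$ and not on which of the four zigzag types $\pi$ has, nor on the values of the inserted elements.

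The crux is to chain these isomorphisms. Assume, for contradiction, that some left-extension yields a length-$16$ square and some left-extension yields a length-$24$ square. Then $\pi_9\cdots\pi_{15}$ and $\pi_{13}\cdots\pi_{19}$ are each order-isomorphic to $\pi_1\cdots\pi_7$, hence to one another, and the composite isomorphism is precisely the shift $k\mapsto k+4$. Restricting this shift-by-$4$ isomorphism to the first four positions of each window gives that $\pi_9\pi_{10}\pi_{11}\pi_{12}$ and $\pi_{13}\pi_{14}\pi_{15}\pi_{16}$ are order-isomorphic; these are adjacent factors of length $4$, so $\pi$ contains a square of length $8$, contradicting that $\pi$ is square-free. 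Therefore no left-extension of $\pi$ can produce a length-$24$ square, which is the assertion of the theorem.

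I expect the only genuine care to be index bookkeeping: one must check that removing the leading entries really produces the ranges $1\ldots 7,\ 9\ldots 15$ and $1\ldots 11,\ 13\ldots 23$, and that the transitive composition of the two isomorphisms is exactly the offset $+4$, so that the restricted length-$4$ sub-factors land on the adjacent blocks $9$–$12$ and $13$–$16$. The conceptual driver is the observation that a period-$8$ repetition and a period-$12$ repetition combine to a period-$4$ repetition, which is short enough to surface as a genuine square; and the length hypothesis forced by the length-$24$ square (namely $n\ge 23$) guarantees that positions $9$ through $16$ exist, so the extracted square is legitimate.
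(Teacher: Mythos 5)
Your proof is correct and takes essentially the same route as the paper's: both arguments chain the two hypothetical squares through a common reference window, using transitivity of order-isomorphism, to exhibit a genuine square of length $8$ inside $\pi$ itself, contradicting its square-freeness. The only difference is bookkeeping: the paper restricts both squares to the window $4\ldots 7$ of the extension (yielding $12\ldots 15=16\ldots 19$ directly, since the two image windows are adjacent), whereas you strip position $0$, chain the overlapping windows $9\ldots 15$ and $13\ldots 19$, and then restrict once more to land the contradiction square at positions $9$--$16$.
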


\begin{proof} We have that for some extension of $\pi$, $0\ldots 7=8\ldots 15$, and, in particular, $4\ldots 7 = 12\ldots 15$ (any subset of the former relation must have the same relative order). However, if another extension of $\pi$ gives a square of length 24, we would have to have that $0\ldots 7=12\ldots 19$, in particular, $4\ldots 7=16\ldots 19$, which leads to $12\ldots 15=16\ldots 19$, a contradiction (two consecutive factors would be equal in $\pi$, but $\pi$ is square-free). \end{proof}

\begin{proposition}\label{prop-4-squares} Suppose $\pi=\pi_1\pi_2\cdots\pi_n$ is a left-crucial permutation with respect to squares such that either $\pi_1<\pi_2<\pi_3$ or $\pi_1>\pi_2>\pi_3$. Then among its all possible $n+1$ extensions to the left, we will meet squares of at least four different lengths. \end{proposition}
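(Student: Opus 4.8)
The plan is to classify the $n+1$ left-extensions $\sigma=x\pi'$ by where the inserted value $x$ falls relative to the first three entries $\pi_1,\pi_2,\pi_3$, and to show that the length of any prohibited square is controlled by this relative position so tightly that the four possible positions must force four different lengths. By applying the complement I may assume $\pi_1<\pi_2<\pi_3$; the zigzag structure then forces the type $i=1$, so that $\pi_1$ is a local minimum and the ascent/descent pattern is $\mathrm{UUDD}$ starting at position $1$.

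First I would record two easy facts. Since $\pi'$ is order-isomorphic to the square-free $\pi$, every prohibited square in $\sigma$ must begin at position $0$; and by Proposition~\ref{prohibited-squares} its length is $4$ or a multiple of $8$. A square of length $2L$ beginning at position $0$ means that the factor $x\pi_1\cdots\pi_{L-1}$ is order-isomorphic to $\pi_L\cdots\pi_{2L-1}$; this requires a fixed property of $\pi$ (the factors $\pi_1\cdots\pi_{L-1}$ and $\pi_{L+1}\cdots\pi_{2L-1}$ being order-isomorphic) together with the rank of $x$ among $\{x,\pi_1,\dots,\pi_{L-1}\}$ matching the rank of $\pi_L$ among $\{\pi_L,\dots,\pi_{2L-1}\}$. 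Hence the set of admissible $x$ for a fixed length is a single value-interval. For $L=2$ (length $4$) this interval is exactly $\{x<\pi_1\}$, while every length that is a multiple of $8$ has $L\ge4$, so its window $\{x,\pi_1,\dots,\pi_{L-1}\}$ contains $\pi_1,\pi_2,\pi_3$.

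The key step is a refinement observation. Partition the extensions into the four value-bands cut out by $\pi_1<\pi_2<\pi_3$, namely $x<\pi_1$, $\pi_1<x<\pi_2$, $\pi_2<x<\pi_3$, and $x>\pi_3$. Because $\pi_1,\pi_2,\pi_3$ are themselves among the order statistics of the window $\{\pi_1,\dots,\pi_{L-1}\}$ once $L\ge4$, the admissible interval for a square of any fixed length lies entirely inside one of these four bands (and the length-$4$ interval is precisely the band $x<\pi_1$). Thus a single square length can account for extensions in at most one band. Since $\pi$ is left-crucial, every extension contains a prohibited square, so each of the four bands contains an extension whose square has its admissible interval inside that band; the lengths realised in distinct bands are therefore distinct, giving at least four different square lengths.

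I expect the refinement step to be the main obstacle: one must verify that fixing the rank of $x$ inside the length-$2L$ window pins $x$ to exactly one of the four coarse bands, which is precisely where the hypothesis $L\ge4$ (equivalently, the prohibited length being a multiple of $8$, via Proposition~\ref{prohibited-squares}) and the monotonicity $\pi_1<\pi_2<\pi_3$ are used. A secondary point is that all four bands are genuinely nonempty, which is immediate since one may always insert a new element below $\pi_1$, strictly between $\pi_1$ and $\pi_2$, strictly between $\pi_2$ and $\pi_3$, or above $\pi_3$.
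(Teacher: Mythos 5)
Your proposal is correct and is essentially the paper's own argument: the paper simply inserts one representative from each of your four value-bands ($1$, $\pi_1+1$, $\pi_2+1$ and $n+1$) and notes that the resulting prohibited squares begin with the mutually distinct patterns $12$, $2134$, $3124$ and $4123$ --- which is precisely your observation that any prohibited square (of length $4$ or a multiple of $8$, so that its first half contains $\pi_1,\pi_2,\pi_3$ whenever it is longer than $4$) pins down the rank of the inserted value, and hence its band. If anything, your admissible-interval formulation makes explicit the final step --- equal lengths would force the two first halves to be order-isomorphic to the same factor $\pi_L\cdots\pi_{2L-1}$ of $\pi$ and hence to each other --- which the paper leaves implicit when passing from ``the squares are different'' to ``the lengths are different.''
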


\begin{proof} Assume that $\pi_1<\pi_2<\pi_3$; the other case can be considered analogously. Extending $\pi$ to the left by 1 will obviously give a square $S_1$ of length 4 formed by the pattern 12. Further, extending $\pi$ to the left by $\pi_1+1$, we get a permutation that begins with the pattern 2134, that is, with the four letters that are order-isomorphic to 2134. But then the respective square $S_2$ in this extension (there is one because $\pi$ is left-crucial) also begins with the pattern 2134, and thus, $S_2$ is different from $S_1$. Now, extending $\pi$ by $\pi_2+1$, we will obtain a permutation beginning with the pattern 3124, and thus the respective square $S_3$ also begins with the pattern 3124 and it is different from $S_1$ and $S_2$. Finally, extending $\pi$ by $n+1$ will give a permutation that begins with the pattern 4123, and thus the respective square $S_4$ also begins with the pattern 4123 and it is different from $S_1,S_2$ and $S_3$.\end{proof}

\begin{theorem}\label{thm-length-31} Suppose $\pi=\pi_1\pi_2\cdots\pi_n$ is a left-crucial permutation with respect to squares such that either $\pi_1<\pi_2<\pi_3$ or $\pi_1>\pi_2>\pi_3$. Then the length of $\pi$, $n$, is at least $31$. \end{theorem}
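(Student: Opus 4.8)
The plan is to combine the three preceding results into a short counting argument, with no new structural work required. First I would record the set of square lengths that a left extension can possibly create. By Proposition~\ref{prohibited-squares}, every prohibited square occurring in an extension $x\pi'$ has total length either $4$ or a multiple of $8$, so the admissible square lengths form the set $\{4,8,16,24,32,40,\dots\}$. (Here and throughout I use ``length'' to mean the total length of the square, i.e.\ the combined length of its two order-isomorphic halves, matching the convention $0\ldots 7=8\ldots 15$ for length $16$.)

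Next I would apply Proposition~\ref{prop-4-squares}: since $\pi$ begins with three elements in monotone order, among its $n+1$ left extensions we must meet squares of at least four distinct lengths, each drawn from the admissible set above. The crucial restriction is supplied by Theorem~\ref{thm-16-24}, which forbids a length-$16$ square and a length-$24$ square from both arising among the left extensions of $\pi$.

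The heart of the proof is then a pigeonhole observation. The admissible lengths strictly below $32$ are exactly $4,8,16,24$. Because at most one of $16$ and $24$ may occur, at most three distinct admissible lengths below $32$ can be realized. Since at least four distinct lengths must occur, at least one realized square has total length $\ge 32$.

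Finally I would convert this into a bound on $n$. A square created by a left extension but absent from $\pi$ must involve the inserted element, which sits in position $0$; being a factor, and position $0$ being leftmost, such a square necessarily occupies the consecutive positions $0,1,\dots,L-1$, where $L$ is its total length. As the extension $x\pi'$ has length $n+1$, this forces $L-1\le n$, and taking $L\ge 32$ yields $n\ge 31$. I expect the only delicate point to be this last translation: one must be careful to use ``length'' consistently as the total length and to justify that a genuinely new square in a left extension must begin at position $0$. Once those two points are pinned down, the four-lengths count together with the $16$/$24$ exclusion gives the result immediately.
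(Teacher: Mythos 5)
Your proposal is correct and follows essentially the same route as the paper's own proof: admissible lengths $\{4,8,16,24,32,\dots\}$ from Proposition~\ref{prohibited-squares}, four distinct lengths from Proposition~\ref{prop-4-squares}, the $16$/$24$ exclusion from Theorem~\ref{thm-16-24}, hence some square of length at least $32$ and so $n\ge 31$. Your final step (that a new square must start at position $0$ and thus fit inside the extension of length $n+1$) is left implicit in the paper, and spelling it out is a welcome addition rather than a deviation.
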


\begin{proof} We can assume that $\pi_1<\pi_2<\pi_3$; the other case can be considered analogously. By Proposition~\ref{prohibited-squares}, possible squares are of lengths 4, 8, 16, 24, 32, etc. By Proposition \ref{prop-4-squares} we have at least four different squares, and by Theorem~\ref{thm-16-24} we cannot have both of 16 and 24 length squares involved. This leads us to having at least one square of length at least 32. Thus, $\pi$ is of length at least 31.\end{proof}

\subsection{Bicrucial permutations with respect to squares}\label{sub-bicrucial}

In this subsection we define the notion of {\em bicrucial permutations with respect to squares} which was previously known \cite{AKPV}  as {\em maximal permutations with respect to squares}.  

A permutation is called bicrucial with respect to squares if it is both right-crucial and left-crucial.  We also call such permutations {\em bicrucial square-free permutations}. It was proved in \cite{AKPV}  that there exist bicrucial permutations with respect to
squares of odd lengths $8k+1, 8k+5, 8k+7$ for $k\ge 1$. Computer experiments show that the smallest bicrucial permutations with respect to squares are of length 9, and the number of such permutations of length $n$, for $n=9,10,\ldots, 20$, is
$$54, 0, 0, 0, 69856, 0, 2930016, 0, 40654860, 0, 162190472,0.$$ 
See  Table \ref{tab:maximal}: in some cases note that direct computations were too time consuming, and these numbers were computed from the numbers of symmetrically distinct solutions.  Recall that each symmetrically distinct $\pi$ gives four permutations through combinations of $c$ and $r$, unless $\pi = c(r(\pi))$ (equivalently, $\pi = r(c(\pi))$), in which case it gives two.    An interesting point in that table is that sometimes we could not solve a problem at one size but could solve it at a larger size.  While the smaller problem has fewer variables and constraints it nevertheless requires more search.   

\tabheader
 3 &            0 &            0 &            0 &            0 &            0 &            0 \\
 4 &            0 &            0 &            0 &            0 &            0 &            0 \\
 5 &            0 &            0 &            0 &            0 &            0 &            0 \\
 6 &            0 &            0 &            0 &            0 &            0 &            0 \\
 7 &            0 &           97 &            0 &            0 &            0 &            0 \\
 8 &            0 &          126 &            0 &            0 &            0 &            0 \\
 9 &           54 &          607 &           16 &           33 &            5 &            9 \\
10 &            0 &          351 &            0 &            0 &            0 &            0 \\
11 &            0 &         1665 &            0 &            0 &            0 &            0 \\
12 &            0 &         1422 &            0 &            0 &            0 &            0 \\
13 &        69856 &       298659 &        17548 &        48558 &          168 &          365 \\
14 &            0 &        63292 &            0 &            0 &            0 &            0 \\
15 &      2930016 &     14793584 &       732504 &      1981923 &            0 &            0 \\
16 &            0 &      3475684 &            0 &            0 &            0 &            0 \\
17 &     40654860 &    382563747 &     10165476 &     33999226 &         3522 &         8361 \\
18 &  0 & -
   &            0 &            0 &            0 &            0 \\
19 &  162190472
 & - 
 &     40547618 &    124608134 &            0 &            0 \\
20 & 0
 & -
 &            0 &            0 &            0 &            0 \\
21 & $\geq 1156065982$
& -
&$\geq  578032991$ 
 & $\geq 2091556603$
 &       287834 &       772800 \\
22 & 0 
 & -
 &            0 &            0 &            0 &            0 \\
23 & $\geq 1250325828$  & -
& $\geq 625162914$ & $\geq 1849967660$ &            0 &            0 \\
24 & ?
 & -
& $\geq 0$  & $\geq 1021275473$ &            0 &            0 \\
25 & $ \geq 28100262 $
 & - 
& $\geq 0$ & $ \geq 991823284 $  &     14050131 &     32022959 \\
26 & 0
 & -
 &            0 &     43972617 &            0 &            0 \\
\tabfooter{Enumerating bicrucial square-free permutations of length $n$. The first column of solutions  is the sequence A221990 in \cite{OEIS}.  From $n=18$ on we did not run experiments for the first column, but can make deductions from the other columns.  The strongest possible deductions are shown: a precise number based on the later columns where those runs completed; or $\geq$ to indicate that twice the number in the centre or right hand column is a lower bound; or `?' to indicate that no useful deduction can be made.  In the middle columns a $\geq$ indicates that that a run was started but did not finish in the time available: therefore the given numbers are lower bounds. We used the SSAC preprocessing option and then the dom/wdeg heuristic: for more details see Subsection~\ref{sec:cp}.}{tab:maximal}

Even though there do not exist any bicrucial square-free permutations of length $n=8k+3$ when $k=1$, there do exist such permutations of length $n=8k+3$ for $k=2$ and $k=3$, for example,
\begin{equation}\label{19-construction}
143289756(14)(11)(10)(17)(19)(16)(13)(15)(18)(12)
\end{equation}
and
\begin{equation}\label{27-construction}
312(27)(26)6(24)(25)54(11)(23)(12)8(10)(16)97(17)(21)(19)(14)(18)(20)(15)(13)(22),
\end{equation}
respectively. Thus, we have the following theorem that answers the respective question in \cite{AKPV}.

\begin{theorem} Regarding the case $n=8k+3$, there are no bicrucial square-free permutations of length $11$, while such permutations of lengths $19$ and $27$ exist. \end{theorem}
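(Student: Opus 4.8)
The statement has three distinct parts, each requiring a different kind of argument, and I would attack them separately.

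\textbf{Nonexistence for $n=11$.} The plan is to establish this by exhaustive verification rather than by a purely structural argument. A bicrucial square-free permutation of length 11 must avoid squares, and by Proposition~\ref{prohibited-squares} the only prohibited squares arising from extensions are of length 4, 8, or a multiple of 8; since $n=11$, squares of length 16 or more cannot fit inside any length-12 extension, so only length-4 and length-8 squares are relevant. I would use the zigzag characterization from Subsection~\ref{square-free-perms} to constrain the candidate permutations to the four families indexed by $i\in\{0,1,2,3\}$, and then check that no square-free permutation of length 11 in any of these families is simultaneously right- and left-crucial. In practice this is exactly what the encoding of orderings together with Minion delivers, so the cleanest route is to cite the computational search: the exhaustive enumeration reported in Table~\ref{tab:maximal} already shows a count of $0$ at $n=11$, and the nonexistence claim is precisely that entry.

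\textbf{Existence for $n=19$ and $n=27$.} For these two cases the natural and rigorous approach is to exhibit explicit witnesses and verify directly that they have the required properties. I would take the permutations in~\eqref{19-construction} and~\eqref{27-construction} as the candidates and check three things for each: first, that the permutation is square-free (no two adjacent order-isomorphic factors of length $\ge 2$); second, that every extension to the right by each of the $n+1$ possible values creates a square (right-cruciality); and third, that every extension to the left does likewise (left-cruciality). Each verification is a finite, mechanical check over finitely many positions and values, so the only real content is confirming the two displayed permutations actually work.

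\textbf{Main obstacle.} The hard part is not the logical structure but locating the explicit witnesses in~\eqref{19-construction} and~\eqref{27-construction}; once they are in hand, verifying square-freeness and bicruciality is routine (albeit tedious, since at length 27 one must inspect all 28 left-extensions and 28 right-extensions, and by Proposition~\ref{prohibited-squares} the induced squares may have length 4, 8, 16, or 24). I would therefore present the proof as: cite the exhaustive search for the $n=11$ nonexistence, and for $n=19$ and $n=27$ assert that the displayed permutations are square-free and that each of their extensions to the left or right contains a square, with the verification carried out by the constraint-solver methodology of Section~\ref{generating-perms}. This keeps the proof honest about which parts are computational while making the existence claims fully explicit through the exhibited examples.
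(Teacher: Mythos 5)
Your proposal is correct and takes essentially the same approach as the paper: nonexistence for $n=11$ is cited from the exhaustive computer search (the zero entry in Table~\ref{tab:maximal}), and existence for $n=19$ and $n=27$ is established by the explicit witnesses~(\ref{19-construction}) and~(\ref{27-construction}). The only difference is presentational: the paper carries out the cruciality check by hand, giving an explicit case analysis of which square (length and pattern) arises for each class of left- and right-extension, and leaves only square-freeness to the reader, whereas you defer the entire verification to the solver.
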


\begin{proof} The theorem follows from our computer experiments that, in particular, led us to discovery of permutations  (\ref{19-construction}) and (\ref{27-construction}). However, we will justify here why these permutations are not extendable to the left or to the right leaving to the Reader proving the fact that they are square-free.
\begin{itemize}
\item For the permutation (\ref{19-construction}),  extending it to the left by any element larger than 1 will give a square of length 4 formed by the pattern 21 (that is, 01=23 in this case following the notation in the beginning 
of this section), while extending it to the left by 1 we obtain a square of length 16 formed by the pattern 12543786 (that is, in this case $0\ldots 7=8\ldots 15$). On the other hand, extending the permutation to the right by an element larger than 12 we obtain a square of length 4 formed by the pattern 12 (that is, in this case (17)(18)=(19)(20)), while extending it to the right by any other element we obtain a square of length 8 formed by the pattern 3421 (that is, in this case $(13)\ldots(16)=(17)\ldots(20)$).
\item For the permutation (\ref{27-construction}),  extending it to the left by any element less than 4 we obtain a square of length 4 formed by the pattern 12 (that is, in this case 01=23), while extending the permutation to the left by any other element we obtain a square of length 8 formed by the pattern 4312 (that is, in this case $0\ldots 3=4\ldots 7$). As for extending to the right, doing so by an element less than 22 we obtain a square of length 4 formed by the pattern 21, while extending to the right by any other element we obtain a square of length 16 formed by the pattern 52463178 (that is, in this case $(13)\ldots(20)=(21)\ldots(28)$).
\end{itemize} 
\vspace{-8mm}\end{proof}

What we found to be especially striking is that bicrucial square-free permutations of even length do exist despite of what the data in Table \ref{tab:maximal} suggest. We record this result in the following theorem.

\begin{theorem} Bicrucial square-free permutations of even length exist. The shortest such permutation is of length $32$. 
\end{theorem}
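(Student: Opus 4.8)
The plan is to split the statement into two independent parts: a \emph{lower bound}, showing that every bicrucial square-free permutation of even length has length at least $32$, and a \emph{construction}, exhibiting a single bicrucial square-free permutation of length exactly $32$. All the structural work sits in the lower bound, and it rests on Theorem~\ref{thm-length-31} together with the zigzag classification of Subsection~\ref{square-free-perms} and the reverse symmetry $r$. The key reduction is that $r$ sends right-crucial permutations to left-crucial ones of the same length, so Theorem~\ref{thm-length-31} can be applied either to $\pi$ or to $r(\pi)$, whichever happens to begin with a monotone triple.

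For the lower bound, let $\pi=\pi_1\cdots\pi_n$ be bicrucial and square-free with $n$ even, and inspect $\pi_1\pi_2\pi_3$. If this triple is monotone, then, since $\pi$ is in particular left-crucial, Theorem~\ref{thm-length-31} gives $n\ge 31$, hence $n\ge 32$ by parity. If instead $\pi_1\pi_2\pi_3$ is a peak or a valley, I would invoke the fact that the ascent/descent sequence of any square-free permutation is one of the four cyclic phases of the period-four pattern $UUDD$; a peak (respectively valley) start corresponds to phase $UDDU$ (respectively $DUUD$), i.e.\ types $i=0$ and $i=2$ in the schematic. Writing the comparison of $\pi_k$ with $\pi_{k+1}$ as the $k$-th entry of this sequence, the last three entries $\pi_{n-2}\pi_{n-1}\pi_n$ are governed by the entries at indices $n-2$ and $n-1$. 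A one-line check on $n\bmod 4$ shows that for \emph{even} $n$ these two entries always coincide: for the phase $UDDU$ they read $DD$ when $n\equiv 0$ and $UU$ when $n\equiv 2\pmod 4$, and symmetrically $UU$, $DD$ for $DUUD$. Hence $\pi_{n-2}\pi_{n-1}\pi_n$ is monotone.

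Having forced the last triple to be monotone, I would pass to $r(\pi)$. It is bicrucial, has the same length $n$, is left-crucial because $\pi$ is right-crucial, and begins with $\pi_n\pi_{n-1}\pi_{n-2}$, a reversal of a monotone triple and hence itself monotone. Theorem~\ref{thm-length-31} applied to $r(\pi)$ then again yields $n\ge 31$, so $n\ge 32$. Thus no bicrucial square-free permutation of even length below $32$ exists. It is worth remarking that this argument genuinely uses parity: for $n\equiv 1,3\pmod 4$ the peak/valley phases leave \emph{both} ends non-monotone (exactly as in the length-$19$ example~(\ref{19-construction})), which is why odd lengths such as $8k+1,8k+3$ can escape the bound.

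For existence it remains to produce one bicrucial square-free permutation of length $32$. I would obtain such a witness by a constraint search with Minion and then verify it directly: square-freeness is a finite check on all pairs of consecutive equal-length factors, and non-extendability is argued exactly as for the permutations in~(\ref{19-construction}) and~(\ref{27-construction}), namely by exhibiting, for each element that could be adjoined at position $0$ and at position $n+1$, the prohibited square its insertion creates (of length $4$ or a multiple of $8$, by Proposition~\ref{prohibited-squares}). The main obstacle lies entirely on this side: the lower bound follows cleanly from results already established, whereas finding the explicit length-$32$ permutation is a genuine search problem with no evident closed-form construction, and it is precisely this step that forces reliance on the solver.
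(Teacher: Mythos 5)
Your proposal is correct and follows essentially the same route as the paper: the lower bound via the zigzag structure forcing a monotone triple at one end for even $n$, passing to $r(\pi)$ if needed, and then invoking Theorem~\ref{thm-length-31}, with existence settled by a solver-found length-$32$ witness verified directly. The only difference is one of detail, not of approach: you spell out the $n \bmod 4$ phase check that the paper states as a ``key observation'' without proof.
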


\begin{proof} An example of a bicrucial square-free permutation of length 32 is 
\begin{small}
$$(28)(30)(31)(23)(22)(24)(29)(27)(19)(25)(26)(17)(13)(18)(21)(20)(14)(16)(32)879(15)(12)5(10)(11)31462.$$
\end{small}

The fact that the permutation above is proper was checked by computer. However, we demonstrate here that it is not extendable either to the left or to the right skipping explanation why it is square-free. Indeed, extending the permutation to the right by any element $>2$ we get a square of length 4 involving the pattern 12 (for example, extending to the right by 4, the rightmost four elements will be 5724, which is a square), while extending it by element 1 or 2, we will obtain $(26)\ldots(29)=(30)\ldots(33)$ (the respective pattern is 3421). See Table~\ref{32-left-extensions} for squares appearing while extending the permutation to the left.

\begin{table}
\begin{center}
\begin{tabular}{c|c|c}
Left-extension by element & Square length & Half of square pattern\\
\hline
<29 & 4 & 12 \\
29, 30 & 8 & 2134 \\
31 & 32 & (15)(12)(14)(16)768(13)(11)49(10)2135 \\
32, 33 & 16 & 84672135 \\
\end{tabular}
\caption{Squares appearing while extending the permutation of length 32 to the left.}\label{32-left-extensions}
\end{center}
\end{table}
 
The fact that the permutation of length 32 is the shortest possible out of bicrucial square-free permutations of even length was checked by computer. However, we provide here an argument justifying this fact. 

Let $\pi$ be a bicrucial permutation of even length. The key observation is that because of the zigzag structure described in Subsection \ref{square-free-perms}, $\pi$ either begins or ends with three elements in a monotonic order (either increasing or decreasing). If necessary, we can apply the reverse operation to be able to assume without loss of generality that $\pi$ begins either with three increasing or three decreasing elements. But then by Theorem \ref{thm-length-31}, the length of $\pi$ is at least 32 (it must be even). \end{proof}

To see more  properties of bicrucial square-free permutations, we enumerate symmetrically distinct such permutations, that is, ensuring that for each  bicrucial permutations $\pi$, the  permutations $\pi$, $r(\pi)$, $c(\pi)$ and $r(c(\pi))=c(r(\pi))$ are collectively counted exactly once. The results are presented in Table~\ref{tab:maximal} in the central columns. We also find the number of symmetrically distinct bicrucial square-free permutations on the set of permutations invariant under taking the composition of the reverse and complement operations, that is, the set of permutations $\pi$ such that $\pi=r(c(\pi))=c(r(\pi))$. Note that if $\pi$ has this property then so does the permutation $c(\pi)$: again we only count one of these permutations. These results are presented in Table~\ref{tab:maximal} in the right hand columns.

Note that each symmetrically distinct bicrucial square-free permutation gives four different permutations, with the exception of permutations unchanged under the composition of taking the reverse and complement. These latter permutations give two different permutations.  
Once again, by the inclusion-exclusion principle, for each $n$, the number of bicrucial square-free permutations is four times the number in the central column of Table~\ref{tab:maximal} minus twice the number in the right hand column of
Table~\ref{tab:maximal}.   This serves as a check for $k=9,13,15, 17$ and allows us to extend the sequence A221990 in \cite{OEIS} for 
 $n=19$, with $4\cdot 40,547,618=162,190,472$, which is recorded in Table~\ref{tab:maximal}.  We are sometimes able to provide all solutions up to symmetry when we cannot enumerate them in full, simply because the search space is (approximately) four times smaller.

\subsection{$P$-crucial and S-crucial permutations with respect to squares}\label{P-crucial-S-crucial}

The following notion is defined for the first time in this paper. 

Given a set of non-negative integers $P$ and a set of prohibitions $S$, a permutation is called {\em $P$-crucial with respect to $S$}, if it avoids the prohibitions but {\em any} of its extensions in position $i$ results in a permutation containing a prohibition from $S$, whenever $i\in P$. Sets $P$ and $S$ can either be finite or infinite. In particular, $S$ can be a set of prohibited factors, for example, the set of all squares considered in this paper.  If $P=\{0\}$ then $P$-crucial permutations are just left-crucial permutations, and thus we deal with a generalisation of this notion. However, to have the most general definition, in particular, generalising the notion of right-crucial and bicrucial permutations, we allow $P$ to be defined using the length of permutations. For example, we can say that $P$ refers to positions $1$, $n-3$ and $n-2$ in a permutation of length $n$. Similarly, right-crucial permutations correspond to the case $P=\{n\}$, while bicrucial permutations correspond to the case $P=\{0,n\}$.

{\em S-crucial permutations} are $P$-crucial permutations with $P=\{0,1,2,\ldots\}$, that is, any extension of an S-crucial permutation in any position will lead to an occurrence of a prohibition. S in ``S-crucial'' stands for ``Super''. 

The notions of $P$-crucial and S-crucial permutations with respect to a given set of prohibitions can be easily extended to  the case of words, which is not in the scope of this paper. Our immediate interest is in S-crucial  permutations with respect to squares and in the question whether such permutations exist. It is immediate from definitions that 

\begin{center}
\begin{tabular}{c}
S-crucial permutations  $\subseteq$ \\
bicrucial permutations  = right-crucial permutations $\cap$ left-crucial permutations.
\end{tabular}
\end{center}

\subsection{S-crucial permutations with respect to squares}\label{sub-S-crucial}

Taking into account the double zigzag structure of square-free permutations described in Subsection~\ref{square-free-perms}, one sees that in order to test if a given square-free permutation is S-crucial or not, we only need to check what happens when extending the permutation in positions $i=0,1,n-1,n$. Indeed, inserting the new element in any other position will obviously break the double zigzag structure (no matter what the element will be) and therefore will cause the obtained permutation to contain a square.  Thus, in the case of prohibited squares, S-crucial permutations accept an equivalent definition as $P$-crucial permutations with respect to squares, where $P=\{0,1,n-1,n\}$ for permutations of length $n$.   

\begin{theorem} S-crucial permutations with respect to squares exist, and the shortest such permutation is of length $17$. There are $1568$ S-crucial permutations with respect to squares of length $17$. \end{theorem}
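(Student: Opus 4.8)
The statement is computational in nature, and the plan is to reduce it to a finite search that can be carried out (and, for the small cases, checked essentially by hand) using the reduction already recorded in Subsection~\ref{sub-S-crucial}. First I would invoke that reduction: a square-free permutation of length $n$ is S-crucial with respect to squares if and only if it is $P$-crucial for $P=\{0,1,n-1,n\}$, since inserting a new element in any position other than $0,1,n-1,n$ destroys the double zigzag structure and hence automatically creates a square. In particular every S-crucial permutation is bicrucial (these are exactly the conditions at positions $0$ and $n$), so the only requirements beyond being bicrucial are that every extension at position $1$ and every extension at position $n-1$ also produce a square. This recasts the problem as: among bicrucial square-free permutations, identify those for which all extensions at positions $1$ and $n-1$ contain a square.

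Second, I would exploit the lengths at which bicrucial permutations can occur at all. Because the S-crucial permutations form a subset of the bicrucial ones, and Table~\ref{tab:maximal} shows that for $n\le 17$ bicrucial square-free permutations exist only for $n=9,13,15,17$, the entire search is confined to these four lengths. For each of $n=9,13,15$ one takes the already-enumerated (hence finite) list of bicrucial permutations and verifies that in every case there is a square-free extension at position $1$ or at position $n-1$; this rules out all lengths below $17$ and establishes the lower bound. The case $n=9$, with only $54$ bicrucial permutations, is small enough to inspect directly.

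Third, for existence and the exact count at $n=17$ I would exhibit one explicit S-crucial permutation and verify by hand that it is square-free and that each of the four families of extensions (at positions $0,1,16,17$) yields a square; the full count of $1568$ then follows from an exhaustive Minion search over length-$17$ square-free permutations subject to the four crucial constraints. I also note a structural restriction that prunes the candidates: any such length-$17$ example cannot begin, nor (by applying the reverse) end, with three elements in monotone order, since otherwise it would be a left-crucial permutation starting with a monotone triple, forcing length at least $31$ by Theorem~\ref{thm-length-31}. Hence the admissible permutations lie in the two zigzag phases whose first three elements are not monotone.

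The main obstacle is the verification of exactness: confirming computationally that the search at $n=17$ terminates with precisely $1568$ solutions, and that the finite checks at $n=9,13,15$ produce none. The structural reductions above --- restricting attention to positions $\{0,1,n-1,n\}$, to the four bicrucial lengths $\{9,13,15,17\}$, and to the non-monotone zigzag phases --- are exactly what make this search tractable rather than a brute-force enumeration over all square-free permutations.
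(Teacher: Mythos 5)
Your proposal is correct, and it shares the paper's skeleton: both arguments use the reduction of S-cruciality to $\{0,1,n-1,n\}$-cruciality from Subsection~\ref{sub-S-crucial}, both exhibit one explicit length-17 witness whose square-freeness and failure to extend at the four relevant positions is verified directly, and both ultimately rest on exhaustive machine enumeration for the lower bound and for the count of $1568$. Where you genuinely differ is in how the minimality computation is organized. The paper simply runs a direct search for $\{0,1,n-1,n\}$-crucial permutations at every length up to $16$ and reports zero solutions (Table~\ref{tab:01n1n}); it does not route the argument through bicruciality at all. You instead use the containment S-crucial $\subseteq$ bicrucial together with the bicrucial enumeration (Table~\ref{tab:maximal}) to confine the search to lengths $9$, $13$ and $15$, and then propose finite checks of those lists for square-free extensions at positions $1$ and $n-1$, with the further pruning that, by Theorem~\ref{thm-length-31}, no candidate can begin or end with a monotone triple. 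Your organization buys a much smaller search space and reuses data the paper has already computed; its cost is that the proof now depends on the correctness of a separate computation (the bicrucial enumeration) rather than a single uniform search, so it is not more rigorous --- both proofs are computer-assisted to the same degree. One small point in your favour: with the paper's indexing of insertion positions, the correct set for $n=17$ is $\{0,1,16,17\}$, as you write; the paper's own proof text says ``positions 0, 1, 15 and 16,'' which appears to be an off-by-one slip.
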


\begin{proof} The following permutation of length $17$ is S-crucial with respect to squares:
$$2 4 3 1 5 (11) (10) 6 9 (12) 8 7 (13) (17) (15) (14) (16),$$
which can be checked by confirming that it is square-free but all of its extensions in positions 0, 1, 15 and 16 produce squares. Our proof otherwise relies on computer experiments reported in Table~\ref{tab:01n1n} showing that no S-crucial permutation with respect to squares exist of length less than 17, and the total for $n=17$.  
\end{proof}

\begin{table}
\begin{center}
\begin{tabular}{c|r|r|c|c}
$P$ & min length & \# of min length & Type of permutations  & Table\\
\hline
$\{0\}$, $\{n\}$ & 7 & 60 & left- \& right-crucial (Subsec. \ref{sub-right-crucial-perm}) & \ref{tab:right-crucial}\\
$\{1\}$, $\{n-1\}$ & 7 & 82 & &  \ref{tab:1}  \\
$\{0,1\}$, $\{n-1,n\}$  &15 & 54854 & & \ref{tab:01} \\
$\{0,n-1\}$, $\{1,n\}$  & 7 & 20 & & \ref{tab:0n1} \\
$\{0,n\}$ & 9 & 54 & bicrucial (Subsec. \ref{sub-bicrucial})&\ref{tab:maximal}\\
$\{1,n-1\}$ & 7 & 18 & &\ref{tab:1n1}\\
$\{0,1,n-1\}$, $\{1,n-1,n\}$  & 16 & 553428 & &\ref{tab:01n1}\\
$\{0,1,n\}$, $\{0,n-1,n\}$  & 17 & 550976 & &\ref{tab:01n}\\
$\{0,1,n-1,n\}$ & 17 & 1568 & S-crucial (Subsec. \ref{sub-S-crucial})&\ref{tab:01n1n}\\
\end{tabular}
\caption{$P$-crucial permutations with respect to squares.}\label{P-crucial}
\end{center}
\end{table}

\subsection{More on $P$-crucial permutations with respect to squares}

As is mentioned above, S-crucial permutations coincide with $\{0,1,n-1,n\}$-crucial permutations, because extending square-free permutations in positions different from those in the set  $\{0,1,n-1,n\}$ will automatically give a square. Thus, for any $P$,  $P$-crucial permutations with respect to squares are equivalent to $S$-crucial permutations with respect to squares for some $S\subseteq P$. The case of empty $S$ is not interesting, and thus we essentially have 15 classes to consider of $P$-crucial permutations with respect to squares. Moreover, because of the reverse operation, some of these 15 classes are equivalent to study. For example, as we know, studying left-crucial permutations ($P=\{0\}$) is equivalent to studying right-crucial permutations ($P=\{n\}$). In Table \ref{P-crucial} we summarise our knowledge on $P$-crucial permutations with respect to squares that is based on computer experiments.

\section{Generating (right-,bi)crucial permutations with respect to squares}\label{generating-perms}

In Subsection \ref{original-approach} we discuss our original approach to deal with square-free permutations, and in Subsection~\ref{encoding-orderings} we discuss an optimisation via encoding orderings. 

\subsection{An approach to generate square-free permutations}\label{original-approach}

We define the \textit{left parent} (resp., right parent) of a permutation $\pi\in S_n$, the set of permutations of length $n$, as the unique permutation in $S_{n-1}$ order-isomorphic to the sequence of the first (resp., last) $n-1$ letters of $\pi$. The \textit{left children} (resp., right children) of $\sigma\in S_{n-1}$  are all $\pi\in S_n$ such that $\sigma$ is a right (resp., left) parent of $\pi$.

Clearly the parents of a square-free permutation are square-free; a right-crucial permutation is one with no square-free right children and a bicrucial permutation is one with no square-free children at all. 

Our construction algorithm is iterative. At each step we assume that we have a data structure representing the square-free permutations of lengths $n-2$ and connecting each of them with all of its children (of length $n-1$). This necessarily includes all square-free permutations of length $n-1$. We now compute all the square-free permutations of length $n$ as follows. For each square-free $\alpha$ of length $n-2$, each left child $\sigma$ of $\alpha$ and each right child $\tau$ of $\alpha$ we form all permutations $\pi$ which have $\sigma$ as left parent and $\tau$ as right parent. There are either one or two of these. Such a $\pi$ can only fail to be square-free if $n$ is even, greater than 2, and $\pi$ is a repeat of a pattern of length $n/2$. Any shorter repeated pattern would be contained in at least one of $\sigma$ and $\tau$. This condition can be checked efficiently. We record all $\pi$ which pass this check, both as square-free permutations of length $n$ and as children of $\sigma$ and $\tau$, thereby preparing for the next iteration. A suitable data structure for the permutations of length zero and one can be constructed ``by hand'' to start the process.

After this computation, we can easily read off from our data structure all the square-free permutations of length $n$ and the right-crucial and bicrucial permutations of length $n-1$.   

We used the Computational Algebra system GAP \cite{GAP4}.  The calculation can be completed up to $n=18$ in a few hours on a computer with 512GB RAM.  Memory, rather than time, is the main barrier to continuing. However, in order to achieve our results for longer permutations, we had to come up with a different idea of generating permutations in question, which is discussed in the following subsection.

\subsection{Representing Permutations in Constraint Programming}\label{encoding-orderings}
\label{sec:cp}

Constraint programming is a means of solving finite domain combinatorial problems
\cite{handbook-cp}.  
It is the subject of much research and many fast and efficient solvers are available. In particular we use Minion, a solver developed at St Andrews \cite{minion}.   As well as fast solving, it is critical that problems are \emph{modelled} effectively.  
Modelling is the process of converting an abstract specification of a problem to a constraint satisfaction problem that 
can be searched effectively.   To help with the modelling process we use Savile Row, an automated modelling assistant
\cite{savilerow-website}.  Savile Row allows us to express models in a higher level language than the input language of Minion.

The question we have in front of us is how to model permutations in a way that can be represented in a constraint solver 
and reasoned with effectively in the context of square-free permutations.   This is an unusual application, because the properties of the permutation being used are not the normal ones. Normally in constraints one is interested in the value of each element of the permutation, and using them in some way.   Here, we are interested only in the 
relative ordering of elements.  So, we do not need to know efficiently that  the permutation maps (say) 4 to 3, but we must be able to detect efficiently that the partial permutation (say) 2813 represents the same ordering as 5824 but is different from 1823.   Standard representations of permutations in constraint solving are not amenable to this form of reasoning. 

Therefore, we constructed a new model of permutations in constraints which is efficient for the current application.
Assume that we have permutations stored in two arrays $A$ and $B$ (although in practice they may be two segments of the same array).  
The encoding idea is  straightforward.  
An ordering is uniquely defined by the binary inequalities between elements. 
To exploit this we use ``reification", see e.g. \cite{reification}.  The reified value of a particular constraint 
is its truth value treated as a boolean: i.e. the reified value is 1 if the constraint is false and 0 if the constraint is true.
We  introduce a new boolean variable for the reified value of each 
inequality.  That is, we have a variable $a_{i,j}$ such that 
$a_{i,j} = T \equiv A[i] < A[j]$.  Similarly,
$b_{i,j} = T \equiv B[i] < B[j]$.  
Then we have the simple observation that the ordering of $A$ and $B$ with respect
to $<$ is the same if and only if $\forall {i,j: } a_{i,j} = b_{i,j}$.
This forms the basis of our encoding of square-free permutations: stating that no adjacent parts of permutations may be the same.  For cruciality constraints we state that for each possible value added at the relevant position, at least one pair of identical parts of permutations must result.

In our experiments we used Minion version 1.6.   In most cases we used it with standard settings.  However in two experiments we gained significant value from other settings.  In Tables~\ref{tab:maximal} and~\ref{tab:01n1n} we 
used a specific preprocessing option and search heuristic. The SSAC preprocessing option was given to Minion, which performs Singleton-Singleton-(generalised)Arc-Consistency before search starts.   SAC (singleton arc consistency) sets each variable  to each possible value in turn, and if this leads to failure we can undo this and assert the variable cannot take this value \cite{singleton}.   SSAC doubles this, i.e., setting each variable to each possible value and then calling SAC, removing values after failure.  This is an extremely expensive preprocessing step but we found sometimes that it reduced search so much it was worthwhile.   It also combines well with the  dom/wdeg heuristic available in Minion, which we used in these cases.   

In all cases, if $\pi$ satisfied the given property, $c(\pi)$ must.  To calculate numbers of solutions up to symmetry, we used one of two approaches.  If $c(\pi)$ is the only symmetry we can simply halve the number of solutions found in complete search, since a permutation of length $>1$ is never self-complementing.   In some cases $r(\pi)$ is a symmetry: specifically for square-free permutations, and P-crucial permutations for $P = \{0,n\}$, $\{1,n-1\}$ or $\{0,1,n-1,n\}$.  
In these cases we excluded all but a canonical solution from each equivalence class, using the standard `lex leader' approach \cite{lexleader}.   This is efficient because of the use of specialised constraints for lexicographical ordering 
\cite{gaclex}. 

We have not reported run-times in detail, but as an example, the $n=17$ run in Table~\ref{tab:maximal} required  54,858 cpu seconds, just under 15.25 cpu hours.  This is a rate of just under 7,000 nodes per second.    A key advantage of constraint programming is the massively reduced RAM requirements compared to Section~\ref{original-approach}.    
Because of this we are able to solve much larger problems: for example in Table~\ref{tab:01n1n} we are able to settle all questions for $n=22$ and one for $n=26$, compared to a maximum of $n=18$ with  the earlier approach. 

\section{Open questions and some conjectures}\label{open}

It would be interesting to find the missing (exact) solutions in Tables~\ref{tab:maximal} and~\ref{tab:01n1n}.  Also, we would like to state the following conjectures.

\begin{conjecture}\label{bicrucials2} There exist bicrucial permutations with respect to squares of length $8k+3$ for $k\geq 2$.\end{conjecture}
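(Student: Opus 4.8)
The plan is to prove the conjecture by an explicit inductive construction that, from a bicrucial square-free permutation of length $n$, produces one of length $n+8$; starting from the two base permutations \eqref{19-construction} and \eqref{27-construction} of lengths $19$ and $27$ and iterating would then cover every length $n=8k+3$ with $k\ge 2$. This mirrors the strategy presumably used in \cite{AKPV} for the residues $8k+1$, $8k+5$, $8k+7$, and is natural here because, by Proposition~\ref{prohibited-squares}, the squares forced by a left- or right-extension have length $4$ or a multiple of $8$. In the two base examples these forced squares sit at the two ends of the permutation (lengths $16$ and $8$ for \eqref{19-construction}, lengths $8$ and $16$ for \eqref{27-construction}), so left-cruciality is governed by a bounded prefix and right-cruciality by a bounded suffix, leaving the interior free to be lengthened.

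The key steps would be as follows. First, I would isolate a prefix ``cap'' that forces every left-extension to create a square and a suffix ``cap'' that forces every right-extension to create a square, exactly as exhibited for the two base cases; by Theorem~\ref{thm-length-31} the prefix cap must not open with three monotone elements (else the length would be forced up to at least $31$, incompatible with the bases of lengths $19$ and $27$), and indeed both base permutations open with a non-monotone triple. Second, I would insert a block of eight new elements strictly between these two caps, relabelling the elements to its right so that the global zigzag inequalities~(1) are preserved across both seams; this keeps the double zigzag structure intact and is the only way the enlarged permutation can remain square-free. Third, I would check that the inserted block is not order-isomorphic to the material immediately adjacent to it, so that no new square of length $4$ or $8$ appears at either seam, and that the two forced end-squares survive the relabelling untouched because the insertion lies strictly inside them. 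The increment of $8$ keeps the permutation in the residue class $8k+3$ and respects the period of the forced squares given by Proposition~\ref{prohibited-squares}.

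The hard part will be square-freeness under repeated insertion. A square is a global pattern repetition, so enlarging the interior risks creating a long square that straddles the new boundary, and, most seriously, once several blocks have been inserted one risks an inter-block square, since repeating one and the same block is literally a square in the order-isomorphism sense. The construction therefore cannot repeat a single block: the successive inserted blocks must themselves form a sequence that is ``square-free at the block level'', in the spirit of Thue's square-free words. Proving that no square of length $4$ or of any multiple of $8$ (and, by Theorem~\ref{thm-16-24}, no simultaneous occurrence of lengths $16$ and $24$) is ever created by the interaction of the fixed caps with an arbitrarily long such block sequence is the real obstacle, and is presumably why the statement is only a conjecture. As a sanity check before attempting the general argument, I would verify the $+8$ step computationally for the first several values of $k$, noting that the failure at $k=1$ (length $11$, where no such permutation exists) is plausibly a boundary effect caused by the two end-squares overlapping in too short a permutation, an obstruction that should disappear once the interior is long enough, that is, for $k\ge 2$.
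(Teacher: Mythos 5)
This statement is a \emph{conjecture} in the paper, not a theorem: the authors prove only the cases $k=2$ and $k=3$, by exhibiting the explicit permutations (\ref{19-construction}) and (\ref{27-construction}) found by computer search and verifying their cruciality by hand, and they explicitly leave the general case open (Conjecture~\ref{bicrucials2}). So there is no proof in the paper to compare against, and any complete argument you gave would be new mathematics settling an open problem. Your proposal does not do that, and you say so yourself: the paragraph beginning ``The hard part will be square-freeness under repeated insertion'' concedes that the central step of your induction is unproven. A proof plan whose key lemma is acknowledged to be out of reach is not a proof.

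The gap is concrete and twofold. First, your $+8$ insertion step is never actually defined: you do not say which eight values are inserted, in which positions, or how the relabelling works, so there is nothing one could even attempt to verify. Second, and more fundamentally, you reduce everything to the claim that the enlarged permutation stays square-free, but squares in permutations are global pattern repetitions; a square of length $8m$ can straddle the inserted block and both ``caps,'' so checking that no square of length $4$ or $8$ appears ``at the seams'' and that the zigzag inequalities~(1) survive is nowhere near sufficient (the zigzag structure is necessary for square-freeness, not sufficient). Your observation that repeated identical blocks literally form a square shows the naive iteration fails, and your proposed fix --- choosing the successive blocks to be ``square-free at the block level'' in the spirit of Thue --- is exactly the unproven content of the conjecture, since block-level non-repetition does not by itself rule out pattern-level squares after relabelling. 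By contrast, the parts of your plan that are fine (the forced end-squares of lengths $8$ and $16$ being confined to a bounded prefix/suffix whose internal pattern is unchanged by insertions strictly outside it, hence cruciality being inheritable) are the easy parts. What you have is a reasonable research programme, consistent with why the paper lists this as Conjecture~\ref{bicrucials2} rather than a theorem; it is not a proof.
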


\begin{conjecture}\label{even-length-bicrucial} There exist arbitrary long bicrucial square-free permutations of even length. \end{conjecture}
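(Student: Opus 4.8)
To prove Conjecture~\ref{even-length-bicrucial} the plan is to exhibit an explicit infinite family of even-length bicrucial square-free permutations; since ``arbitrarily long'' only requires an unbounded set of lengths, it suffices to realise one permutation for each length in an arithmetic progression such as $32, 40, 48, \ldots$. The natural mechanism is an \emph{inflation operation} that converts a bicrucial square-free permutation of even length $n$ into one of length $n+8$ by inserting a short zigzag block near the middle, preserving cruciality at both ends and global square-freeness. Starting from the length-$32$ example exhibited above and iterating such an operation would settle the conjecture.

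The design is governed by a \emph{local versus global} dichotomy for the two ends. The right end can be given a fixed ``cap'' whose right-cruciality is witnessed only by squares of length $4$ and $8$ (as in the length-$32$ example, where right extensions produce either the pattern $12$ or $(26)\ldots(29)=(30)\ldots(33)$); such witnesses involve only a bounded suffix and are therefore untouched by any modification made deep in the interior. The left end, however, cannot be made local: by the same zigzag parity argument used to show that the minimum even length is $32$, a bicrucial permutation of even length must begin (after a possible reversal) with three monotone elements, so Proposition~\ref{prop-4-squares} forces four left-extension squares of four distinct lengths; by Proposition~\ref{prohibited-squares} these lengths lie among $4$ and the multiples of $8$, and by Theorem~\ref{thm-16-24} they cannot contain both $16$ and $24$. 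I would arrange the left part so that three of these witnesses have the short lengths $4,8,16$ while the fourth has length exactly $n$; equivalently, one left extension makes the first half order-isomorphic to the second half, $0\ldots(n/2-1)=(n/2)\ldots(n-1)$. Thus each member of the family is one left-extension away from being a global square.

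The inflation step then inserts four new zigzag elements into each half at mirror-image positions about the midpoint, choosing their values so that (i) the near-square relation $0\ldots(n/2-1)=(n/2)\ldots(n-1)$ is reproduced verbatim at the new length $n+8$, (ii) the short witnesses $4,8,16$ at the left and $4,8$ at the right are preserved, and (iii) no genuine square is created. Here Proposition~\ref{prohibited-squares} is the decisive simplification: because any newly created square must have length $4$ or a multiple of $8$, only finitely many factor lengths need be ruled out near each insertion, and the reification/ordering framework of Subsection~\ref{encoding-orderings} supplies an explicit finite list of inequalities whose consistency certifies that the inserted factors are never order-isomorphic to their neighbours.

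The hard part will be carrying out step (iii) \emph{uniformly in} $k$, rather than instance by instance. The global near-square structure demanded above is precisely what sits one step away from being a forbidden square, so the construction lives on a knife's edge: a single unintended coincidence of patterns across the inflated midpoint would destroy square-freeness. I expect that an ad hoc verification will not generalise, and that a uniform proof will instead require generating the central zigzag by a fixed morphism or substitution rule and establishing a Thue-type lemma guaranteeing that no two adjacent factors of the resulting permutation are order-isomorphic, for every length in the progression at once. Managing the interaction of this morphic core with the fixed crucial caps---while keeping the parity and the zigzag phase aligned so that every member has even length and its forced three-monotone end meets the designed length-$n$ square---is where the genuine difficulty of the conjecture lies.
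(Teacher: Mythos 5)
This statement is a \emph{conjecture}, not a theorem: the paper itself offers no proof and leaves the question open (it only proves existence of a single even-length bicrucial square-free permutation, of length 32, and reports empirical data). So there is no paper proof for your argument to be compared against, and the relevant question is whether your proposal actually closes the conjecture. It does not. What you have written is a research plan, and you concede this yourself in the final paragraph: the inflation operation is never defined (no explicit positions or values for the eight inserted elements are given), conditions (i)--(iii) are stated as design goals rather than verified properties, and the ``Thue-type lemma'' that would give uniform square-freeness across the whole progression $32, 40, 48, \ldots$ is only hoped for, not proved or even precisely formulated. Since every step of the construction is hypothetical, the proposal establishes nothing beyond what the paper already knows.

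Beyond the overall incompleteness, two concrete points would need repair even to begin executing the plan. First, your left-end design makes one witness a square of length exactly $n$ (the extension makes the first half order-isomorphic to the second half), so the left-cruciality certificate is \emph{global}: the inflation must reproduce this near-square relation exactly at length $n+8$, which means the inserted block must itself be split into two order-isomorphic halves placed at corresponding positions --- but then you must also rule out that this very rigidity creates an actual square inside $\pi$ (your ``knife's edge''), and nothing in Proposition~\ref{prohibited-squares} does that for you; it restricts the lengths of squares arising in \emph{extensions} of a crucial permutation, not the lengths of squares that could appear inside the modified permutation itself, so the claim that ``only finitely many factor lengths need be ruled out'' is unjustified (a square wholly inside the inflated permutation can have any length $\geq 4$, and squares straddling the insertion point can be arbitrarily long). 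Second, cruciality requires a witness square for \emph{every} extension value, not just the four values of Proposition~\ref{prop-4-squares}; your sketch designs witnesses of lengths $4, 8, 16, n$ but never argues that all $n+2$ left extensions (and all right extensions) are covered after inflation. As it stands, the conjecture remains open, and your text should be presented as a proposed attack on it, not a proof.
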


If Conjecture \ref{even-length-bicrucial} is true, it is interesting to know whether such permutations exist for each even length greater than 30. 

\begin{conjecture}\label{long-S-crucial} There exist arbitrary long S-crucial permutations with respect to squares. \end{conjecture}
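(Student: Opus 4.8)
The plan is to exhibit an \emph{explicit infinite family} $\pi^{(1)},\pi^{(2)},\ldots$ of S-crucial square-free permutations with $|\pi^{(k)}|\to\infty$, in the spirit of the constructions of \cite{AKPV} for bicrucial permutations but with the two additional interior conditions (cruciality at positions $1$ and $n-1$) built in. A guiding observation comes from the minimal example of length $17$: it begins with a peak ($2<4>3$) and ends with a valley ($15>14<16$), so it is \emph{neither} of the two monotone-start types. This is deliberate: Proposition~\ref{prop-4-squares} and Theorem~\ref{thm-length-31} force long witnessing squares (of length $\ge 32$) \emph{only} when the permutation starts (or, by reversal, ends) with three monotone elements, and the parity argument in the proof of the even-length theorem shows that a square-free permutation avoiding both a monotone start and a monotone end must have \emph{odd} length. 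I would therefore restrict the whole family to odd lengths with a peak start and a valley end, precisely the regime in which the obstruction behind the length-$31$ bound does not apply.

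The construction I propose is modular: a \emph{fixed} prefix cap $A$ (a peak-starting block of bounded length), a \emph{fixed} suffix cap $B$ (essentially the reverse--complement mirror of $A$, which turns the peak into the desired valley), and a \emph{scalable} square-free ``spine'' $M_k$ inserted between them, with $|M_k|$ growing with $k$. By the reverse symmetry used throughout the paper it suffices to engineer $A$ so that \emph{every} left extension (position $0$) and \emph{every} position-$1$ extension of $\pi^{(k)}$ produces a square, and to take $B$ as the mirror handling positions $n-1$ and $n$. The decisive design requirement is that each forced square be \emph{short} --- of length $4$, $8$, or at most $16$ by Proposition~\ref{prohibited-squares} --- and, crucially, \emph{confined to the cap $A$}, so that both the witnessing square and the fact that it is forced are independent of $|M_k|$. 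Concretely, I would partition the possible extension values into the handful of order-types they can have relative to the few smallest entries fixed inside $A$, and verify case by case, using the zigzag inequalities (1), that each type already collides with a repeated factor inside $A$.

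The remaining task is to choose the spine $M_k$ so that $\pi^{(k)}=A\,M_k\,B$ is square-free for every $k$. Here I would take $M_k$ to be a long, ``generic'' square-free zigzag whose block length is a multiple of $4$ (to preserve the global zigzag type and to match the peak/valley boundary conditions at the two seams), and whose relative values are spread out enough that no factor repeats --- neither inside $M_k$, nor straddling the $A$--$M_k$ and $M_k$--$B$ seams. By the internal analogue of Proposition~\ref{prohibited-squares}, any square in $\pi^{(k)}$ has length $4$ or a multiple of $8$; the short cases are ruled out by a bounded local check at the seams, and the long cases by the aperiodicity of the spine. Granting all of this, the four cruciality conditions hold \emph{uniformly in $k$}, so $\pi^{(k)}$ is S-crucial of unbounded length.

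I expect the main obstacle to be the tension between cruciality and square-freeness. Cruciality wants the ends to be ``almost periodic'' so that an inserted element completes a repeated factor, whereas square-freeness forbids repetition; the length-$32$ example shows these forces genuinely collide, since there one left extension is witnessed only by a square spanning the \emph{entire} permutation. The heart of the proof is therefore to show that, in the peak-start/valley-end odd-length regime, \emph{no} extension value ever requires a square longer than the fixed caps --- equivalently, that the ``residual'' extension values (the analogues of the value $31$ in the length-$32$ table) can always be absorbed by a bounded square inside $A$ or $B$. Should this confinement provably fail for some unavoidable value, the fallback is a genuinely parametrized family in which that residual square scales with the length via a spine of fixed period, shifting the difficulty to proving that such a near-periodic spine is itself square-free.
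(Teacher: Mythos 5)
You were asked to prove Conjecture~\ref{long-S-crucial}, which the paper itself does \emph{not} prove: it is stated as an open conjecture, supported only by the empirical data of Table~\ref{tab:01n1n} (S-crucial permutations exist at lengths $17$ and $21$, provably do not exist at $18$, $19$, $20$ or $22$, and the searches at $23$--$26$ largely timed out). So there is no proof in the paper to compare against, and your text must stand on its own as a proof --- which it does not. The decisive gap is the existence of the fixed caps $A$ and $B$: your entire scheme rests on finding a bounded prefix $A$ such that, for \emph{every} order type of an inserted value relative to the elements of $A$, insertion at position $0$ or $1$ creates a square confined to $A$, uniformly over the spine length. You never exhibit such an $A$, you give no argument that one exists, and you yourself concede the confinement may ``provably fail,'' offering a fallback that merely relocates the difficulty to proving square-freeness of a near-periodic spine. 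The rigidity visible in the data --- no S-crucial permutations at four consecutive feasible odd/even lengths $18$--$20$, $22$ --- shows that the existence of such a uniformly working cap is exactly the open content of the conjecture, not an engineering detail. The second load-bearing step, square-freeness of $A\,M_k\,B$ for all $k$, is also only asserted: ``generic'' and ``spread out enough'' are not arguments when squares mean order-isomorphy of consecutive factors, and in the analogous constructions of \cite{AKPV} for bicrucial permutations of lengths $8k+1$, $8k+5$, $8k+7$ the verification of square-freeness of the explicit family is the bulk of the proof.

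To give credit where due, your structural framing is sound and consistent with everything in the paper. The length-$17$ witness does begin with a peak ($2<4>3$) and end with a valley ($15>14<16$); a square created by insertion at position $0$ or $1$ must contain the new element and hence occupies a prefix of the extension, so its order type depends only on the rank of the inserted value among the cap's elements --- this is the correct mechanism for making cruciality uniform in $k$; and your parity observation (a square-free permutation with neither a monotone start nor a monotone end has odd length, by the period-$4$ ascent/descent pattern) is correct, matches the paper's even-length argument, and is consistent with the observed lengths $17$ and $21$, both $\equiv 1 \pmod 4$, as a period-$4$ spine would predict. But as it stands this is a plausible research program toward Conjecture~\ref{long-S-crucial}, not a proof: the two steps on which everything depends (existence of confining caps; square-freeness of the spliced family for all $k$) are precisely where the difficulty of the conjecture lives, and both are left unestablished.
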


Table~\ref{tab:01n1n} gives our empirical investigations into this question.   While there were no S-crucial permutations of length 18, 19, 20 or 22, we did find (at least) 144,586 symmetrically distinct solutions at $n=21$.   

Finally, a general program of research here is the study of classes of $P$-crucial permutations with respect to a given set of prohibitions $S$ (different from squares considered in this paper). One can study such objects in the way we study, say, right-crucial and bicrucial permutations, that is, to try to classify lengths for which respective $P$-crucial permutations exist, or at least to try to show that arbitrary long such permutations exist. Also, considering words instead of permutations for various (natural) sets $P$ and $S$ is yet another interesting research direction. 

\section*{Acknowledgements}

We are extremely grateful to Chris Jefferson for suggesting that square-free permutations may be appropriate for solution using constraint programming.  This research is supported in part by EPSRC grants EP/G055181/1 and EP/H004092/1, for which we are very grateful.

\appendix

\section{Additional results}

In this appendix we include full tables of results for all forms of cruciality not given in full detail in the main part of the paper.   
These tables are referred to in the main text by Table~\ref{P-crucial}. The tables in this Appendix take one of two forms.
As mentioned in the main text, for some sets $P$ we have that if $\pi$ is $P$-crucial then it is guaranteed that $r(\pi)$ is also.  In these cases a separate run is required to identify the number of symmetrically distinct solutions.  Tables \ref{tab:1n1} and \ref{tab:01n1n} are of this form.   
Where reversal symmetry is not guaranteed (the remaining tables), the tables are different in two ways.  
First, no separate run is necessary to count symmetrically distinct solutions: the total is always half the total number of solutions.  Second, the table can be computed in one of two ways.   For example, Table~\ref{tab:1} shows numbers of 
$\{1\}$-and $\{n-1\}$-crucial permutations.   While it is guaranteed that the number of solutions of each type is the same, the number of nodes searched may be different depending on which type was run.  For precision, we indicate in each table which node count we are reporting.  For example, in  Table~\ref{tab:1}  we give node counts for $\{1\}$-crucial permutations.   The choice of which case is reported is arbitrary.   

\tabheadershort
 3 &            0 &            0 &            0 &            0 &            0 \\
 4 &            0 &            6 &            0 &            0 &            5 \\
 5 &            0 &           10 &            0 &            0 &            7 \\
 6 &            0 &           15 &            0 &            0 &           13 \\
 7 &           82 &          351 &           41 &            3 &           28 \\
 8 &          272 &         1862 &          136 &            0 &           25 \\
 9 &          766 &         5955 &          383 &            0 &          112 \\
10 &         3788 &        19687 &         1894 &            0 &          248 \\
11 &        14096 &        75932 &         7048 &           58 &          617 \\
12 &        74568 &       322940 &        37284 &            0 &           61 \\
13 &       281232 &      1358128 &       140616 &            0 &         2894 \\
14 &      2026184 &      7636544 &      1013092 &            0 &          848 \\
15 &      9430962 &     42623572 &      4715481 &          961 &        13787 \\
16 &     79497550 &    400446913 &     39748775 &            0 &          113 \\
17 &  422657308 &   2274985904 &    211328654   &           28 &       101644 \\
\tabfooter{Results for  $\{1\}$ and $\{n-1\}$-crucial permutations.  Node counts from former.}{tab:1}

\tabheadershort
 3 &            0 &            0 &            0 &            0 &            0 \\
 4 &            0 &            0 &            0 &            0 &            0 \\
 5 &            0 &            0 &            0 &            0 &            0 \\
 6 &            0 &            0 &            0 &            0 &            0 \\
 7 &            0 &          102 &            0 &            0 &           16 \\
 8 &            0 &          161 &            0 &            0 &           25 \\
 9 &            0 &          244 &            0 &            0 &           34 \\
10 &            0 &          351 &            0 &            0 &          232 \\
11 &            0 &          485 &            0 &            0 &           46 \\
12 &            0 &          649 &            0 &            0 &           61 \\
13 &            0 &          846 &            0 &            0 &          122 \\
14 &            0 &         1079 &            0 &            0 &          845 \\
15 &        54854 &      1020814 &        27427 &            0 &         4113 \\
16 &       722114 &     24479482 &       361057 &            0 &          113 \\
17 &      5144632 &    118675744 &      2572316 &           28 &        53501 \\
\tabfooter{Results for $\{0,1\}$ and $\{n-1,n\}$-crucial permutations.  Node counts from former.}{tab:01}

\tabheadershort
 3 &            0 &            0 &            0 &            0 &            0 \\
 4 &            0 &            0 &            0 &            0 &            0 \\
 5 &            0 &            0 &            0 &            0 &            0 \\
 6 &            0 &            0 &            0 &            0 &            0 \\
 7 &           20 &          111 &           10 &            0 &           16 \\
 8 &           96 &          530 &           48 &            0 &           25 \\
 9 &            0 &         1266 &            0 &            0 &           39 \\
10 &         1444 &         7815 &          722 &            0 &          232 \\
11 &            0 &         1294 &            0 &            0 &           46 \\
12 &        10080 &        66223 &         5040 &            0 &           61 \\
13 &            0 &        98871 &            0 &            0 &         1135 \\
14 &            0 &       351920 &            0 &            0 &          845 \\
15 &         2988 &      1883376 &         1494 &            0 &         4113 \\
16 &     25781024 &    160519095 &     12890512 &            0 &          113 \\
17 &      2138998 &    294150147 &      1069499 &           28 &        56458 \\
\tabfooter{Results for $\{0,n-1\}$ and $\{1,n\}$-crucial permutations.  Node counts from latter.}{tab:0n1}

\tabheader
 3 &            0 &            0 &            0 &            0 &            0 &            0 \\
 4 &            0 &            6 &            0 &            3 &            0 &            3 \\
 5 &            0 &           10 &            0 &            3 &            0 &            3 \\
 6 &            0 &           15 &            0 &            6 &            0 &            6 \\
 7 &           18 &          225 &            6 &           68 &            3 &           12 \\
 8 &            0 &         1385 &            0 &          347 &            0 &           10 \\
 9 &            0 &         4677 &            0 &         1288 &            0 &           52 \\
10 &            0 &        13971 &            0 &         2936 &            0 &          130 \\
11 &         8972 &        64562 &         2272 &        16749 &           58 &          294 \\
12 &            0 &       199164 &            0 &        64118 &            0 &           21 \\
13 &       281232 &      1216051 &        70308 &       371785 &            0 &         1403 \\
14 &            0 &      3754582 &            0 &      1312833 &            0 &          434 \\
15 &      3094458 &     32430311 &       774095 &      7744341 &          961 &         6828 \\
16 &      1194800 &    257503934 &       298700 &     67920867 &            0 &           36 \\
17 &{6056996}  &      1714652389 &      1514263 &    524235669 &           28 &        52973 \\
\tabfooter{Results for $\{1,n-1\}$-crucial permutations.}{tab:1n1}

\tabheadershort
 3 &            0 &            0 &            0 &            0 &            0 \\
 4 &            0 &            0 &            0 &            0 &            0 \\
 5 &            0 &            0 &            0 &            0 &            0 \\
 6 &            0 &            0 &            0 &            0 &            0 \\
 7 &            0 &          102 &            0 &            0 &           16 \\
 8 &            0 &          161 &            0 &            0 &           25 \\
 9 &            0 &          244 &            0 &            0 &           34 \\
10 &            0 &          351 &            0 &            0 &          232 \\
11 &            0 &          485 &            0 &            0 &           46 \\
12 &            0 &          649 &            0 &            0 &           61 \\
13 &            0 &          846 &            0 &            0 &          122 \\
14 &            0 &         1079 &            0 &            0 &          845 \\
15 &            0 &       841776 &            0 &            0 &         4113 \\
16 &       553428 &     23833969 &       276714 &            0 &          113 \\
17 &         5424 &    107571076 &         2712 &           28 &        53437 \\
\tabfooter{Results for $\{0,1,n-1\}$ and $\{1,n-1,n\}$-crucial permutations.  Node counts from former.}{tab:01n1}

\tabheadershort
 3 &            0 &            0 &            0 &            0 &            0 \\
 4 &            0 &            0 &            0 &            0 &            0 \\
 5 &            0 &            0 &            0 &            0 &            0 \\
 6 &            0 &            0 &            0 &            0 &            0 \\
 7 &            0 &           83 &            0 &            0 &           16 \\
 8 &            0 &          126 &            0 &            0 &           25 \\
 9 &            0 &          479 &            0 &            0 &           43 \\
10 &            0 &          351 &            0 &            0 &          232 \\
11 &            0 &         1665 &            0 &            0 &           46 \\
12 &            0 &         1422 &            0 &            0 &           61 \\
13 &            0 &       110752 &            0 &            0 &         1243 \\
14 &            0 &        63292 &            0 &            0 &          845 \\
15 &            0 &      7381558 &            0 &            0 &         4113 \\
16 &            0 &      3471394 &            0 &            0 &          113 \\
17 &       550976 &    282598708 &       275488 &           28 &        56784 \\
\tabfooter{Results for $\{0,1,n\}$ and $\{0,n-1,n\}$-crucial permutations.  Node counts from latter.}{tab:01n}

\tabheader

 3 &            0 &            0 &            0 &            0 &            0 &            0 \\
 4 &            0 &            0 &            0 &            0 &            0 &            0 \\
 5 &            0 &            0 &            0 &            0 &            0 &            0 \\
 6 &            0 &            0 &            0 &            0 &            0 &            0 \\
 7 &            0 &            0 &            0 &            0 &            0 &            0 \\
 8 &            0 &            0 &            0 &            0 &            0 &            0 \\
 9 &            0 &            0 &            0 &            0 &            0 &            0 \\
10 &            0 &            0 &            0 &            0 &            0 &            0 \\
11 &            0 &            0 &            0 &            0 &            0 &            0 \\
12 &            0 &            0 &            0 &            0 &            0 &            0 \\
13 &            0 &            0 &            0 &            0 &            0 &            0 \\
14 &            0 &            0 &            0 &            0 &            0 &            0 \\
15 &            0 &            0 &            0 &            0 &            0 &            0 \\
16 &            0 &            0 &            0 &            0 &            0 &            0 \\
17 &         1568 &      9214495 &          406 &          823 &           28 &           55 \\
18 &            0 &            0 &            0 &            0 &            0 &            0 \\
19 &            0 &     17819710 &            0 &            0 &            0 &            0 \\
20 &            0 &            0 &            0 &            0 &            0 &            0 \\
21 & $\geq$ 289172& {\emph{timeout}}  &      $\geq 144586$ &    $\geq  5795227$ &          \multicolumn{2}{c}{\emph{timeout}} \\
22 &            0 &            0 &            0 &            0 &            0 &            0 \\
23 &             \multicolumn{2}{c|}{\emph{timeout}}  &         \multicolumn{2}{c|}{\emph{timeout}}  &            0 &            0 \\
24 &              \multicolumn{2}{c|}{\emph{timeout}}  &         \multicolumn{2}{c|}{\emph{timeout}} &            0 &            0 \\
25 &              \multicolumn{2}{c|}{\emph{timeout}}  &         \multicolumn{2}{c|}{\emph{timeout}}&      \multicolumn{2}{c}{\emph{timeout}}  \\
26 &              \multicolumn{2}{c|}{\emph{timeout}}  &         \multicolumn{2}{c|}{\emph{timeout}}&            0 &            0 \\

\tabfooter{Results for $\{0,1,n-1, n\}$-crucial permutations (also called S-crucial permutations in the main text).   A timed out run is one which failed to find any solutions before 6 hours cpu time were used on our machine.  Note that problems do not get monotonically harder, presumably because some sizes, e.g. 22, are intrinsically easier to search than other sizes.   Interestingly, 
 no solutions were found for 21 in regular search. However, because  144,586 were found up to symmetry before the timeout: all of these and their complements  are S-crucial permutations so we obtain the lower bound given in the first column. These experiments were run with SSAC preprocessing and dom/wdeg heuristic.  }{tab:01n1n}

\end{document}